\newtheorem{theorem}{Theorem}
\newtheorem{prop}[theorem]{Proposition}
\newtheorem{definition}[theorem]{Definition}
\newenvironment{proof-sketch}{\noindent{\bf Sketch of Proof}\hspace*{1em}}{\qed\bigskip}
\newcommand{\RR}{\mathbb R}
\newcommand{\NN}{\mathbb N}
\renewcommand{\leq}{\leqslant}
\renewcommand{\geq}{\geqslant}
\begin{document}

\title[Evolution inclusions with noncoercive viscosity term]{Nonlinear second order evolution inclusions with noncoercive viscosity term}

\author[N.S. Papageorgiou]{Nikolaos S. Papageorgiou}
\address[N.S. Papageorgiou]{National Technical University, Department of Mathematics,
				Zografou Campus, Athens 15780, Greece}
\email{\tt npapg@math.ntua.gr}

\author[V.D. R\u{a}dulescu]{Vicen\c{t}iu D. R\u{a}dulescu}
\address[V.D. R\u{a}dulescu]{Faculty of Applied Mathematics,
AGH University of Science and Technology,
al. Mickiewicza 30, 30-059 Krakow, Poland
 \& Institute of Mathematics ``Simion Stoilow" of the Romanian Academy, P.O. Box 1-764,
          014700 Bucharest, Romania}
\email{\tt vicentiu.radulescu@imar.ro}

\author[D.D. Repov\v{s}]{Du\v{s}an D. Repov\v{s}}
\address[D.D. Repov\v{s}]{Faculty of Education and Faculty of Mathematics and Physics, University of Ljubljana, SI-1000 Ljubljana, Slovenia}
\email{\tt dusan.repovs@guest.arnes.si}

\keywords{Evolution triple, compact embedding, parabolic regularization, noncoercive viscosity term, a priori bounds.\\
\phantom{aa} 2010 AMS Subject Classification: 35L90 (Primary); 35R70, 47H04, 47H05 (Secondary)}

\begin{abstract}
In this paper we deal with a second order nonlinear evolution inclusion, with a nonmonotone, noncoercive viscosity term. Using a parabolic regularization (approximation) of the problem and {\it a priori} bounds that permit passing to the limit, we prove that the problem has a solution.
\end{abstract}

\maketitle


\section{Introduction}\label{sec1}

Let $T=[0,b]$ and let $(X, H, X^*)$ be an evolution triple of spaces, with the embedding of $X$ into $H$ being compact (see Section \ref{sec2} for definitions).

In this paper, we study the following nonlinear evolution inclusion:
\begin{equation}\label{eq1}
	\left\{
		\begin{array}{ll}
			u''(t) + A(t, u'(t)) + Bu(t) \in F(t, u(t), u'(t))\ \mbox{for almost all}\ t\in T,\\
			u(0) = u_{0},\ u'(0) = u_1.
		\end{array}
	\right\}
\end{equation}

In the past, such multi-valued problems were studied by Gasinski \cite{3}, Gasinski and Smolka \cite{6, 7}, Mig\'orski {\it et al.} \cite{migo1, 11, 12, migo2}, Ochal \cite{13}, Papageorgiou, R\u adulescu and Repov\v{s} \cite{prr1,prr2}, Papageorgiou and Yannakakis \cite{14, 15}. The works of Gasinski \cite{3}, Gasinski and Smolka \cite{6, 7} and Ochal \cite{13}, all deal with hemivariational inequalities, that is, $F(t, x, y)=\partial J(x)$ with $J(\cdot)$ being a locally Lipschitz functional and $\partial J(\cdot)$ denoting the Clarke subdifferential of $J(\cdot)$. In Papageorgiou and Yannakakis \cite{14, 15},  the multivalued term $F(t, x, y)$ is general (not necessarily of the subdifferential type) and depends also on the time derivative of the unknown function $u(\cdot)$. With the exception of Gasinski and Smolka \cite{7}, in all the other works the viscosity term $A(t, \cdot)$ is assumed to be coercive or zero. In the work of Gasinski and Smolka \cite{7}, the viscosity term is autonomous (that is, time independent) and $A:X\rightarrow X^*$ is linear and bounded.

In this work, the viscosity term $A:T\times X\rightarrow X^*$ is time dependent, noncoercive, nonlinear and nonmonotone in $x\in X$. In this way, we extend and improve the result of Gasinski and Smolka \cite{7}. Our approach uses a kind of parabolic regularization of the inclusion, analogous to the one used by Lions \cite[p. 346]{10} in the context of semilinear hyperbolic equations.

\section{Mathematical Background and Hypotheses}\label{sec2}

Let $V, Y$ be Banach spaces and assume that $V$ is embedded continuously and densely into $Y$ (denoted by $V\hookrightarrow Y$). Then we have the following properties:\\
\indent	(i) $Y^*$ is embedded continuously into $V^*$;\\
\indent	(ii) if $V$ is reflexive, then $Y^*\hookrightarrow V^*$.

The following notion is a useful tool in the theory of evolution equations.
\begin{definition}\label{def1}
	By an ``evolution triple" (or ``Gelfand triple") we understand a triple of spaces $(X, H, X^*)$ such that
	\begin{itemize}
		\item [(a)] $X$ is a separable reflexive Banach space and $X^*$ is its topological dual;
		\item [(b)] $H$ is a separable Hilbert space identified with its dual $H^*$, that is, $H=H^*$ (pivot space);
		\item [(c)] $X\hookrightarrow H$.
	\end{itemize}
\end{definition}
Then from the initial remarks we have
$$X\hookrightarrow H = H^*\hookrightarrow X^*.$$

In what follows, we denote by $||\cdot||$ the norm of $X$, by $|\cdot|$ the norm of $H$ and by $||\cdot||_*$ the norm of $X^*$. Evidently we can find $\hat{c}_1, \hat{c}_2 > 0$ such that
$$|\cdot|\leqslant\hat{c}_1||\cdot||\ \mbox{and}\ ||\cdot||_*\leqslant\hat{c}_2|\cdot|\,.$$
By $(\cdot, \cdot)$ we denote the inner product of $H$ and by $\langle\cdot, \cdot\rangle$ the duality brackets for the pair $(X^*, X)$. We have
\begin{equation}\label{eq2}
	\langle\cdot, \cdot\rangle|_{H\times X} = (\cdot, \cdot).
\end{equation}

Let $1<p<\infty$. The following space is important in the study of problem \eqref{eq1}:
$$W_p(0, b)=\left\{u\in L^p(T, X): u'\in L^{p'}(T, X^*)\right\}\ \left(\frac{1}{p} + \frac{1}{p'} = 1\right).$$
Here $u'$ is understood in the distributional sense (weak derivative). We know that $L^p(T, X)^*$ $= L^{p'}(T, X^*)$ (see, for example, Gasinski and Papageorgiou \cite[p. 129]{4}). Suppose that $u\in W_p(0,b)$. If we view $u(\cdot)$ as an $X^*$-valued function, then $u(\cdot)$ is absolutely continuous, hence differentiable almost everywhere and this derivative coincides with the distributional one. So, $u'\in L^{p'}(T, X^*)$ and we can say
$$W_p(0, b) \subseteq AC^{1,p'}(T, X^*) = W^{1,p'}((0, b), X^*).$$
The space $W_p(0, b)$ is equipped with the norm
$$||u||_{W_p}=\left[||u||^p_{L^p(T, X)} + ||u'||^p_{L^{p'}(T, X^*)}\right]^{\frac{1}{p}}\ \mbox{for all}\ u\in W_p(0,b).$$
Evidently, another equivalent norm on $W_p(0,b)$ is
$$|u|_{W_p}=||u||_{L^p(T, X)} + ||u'||_{L^p(T, X^*)}\ \mbox{for all}\ u\in W_p(0,b).$$
With any of the above norms, $W_p(0,b)$ becomes a separable reflexive Banach space. We have that
\begin{eqnarray}
	& & W_p(0, b) \hookrightarrow C(T, H); \label{eq3} \\
	& & W_p(0, b) \hookrightarrow L^p(T, H)\ \mbox{and the embedding is compact}. \label{eq4}
\end{eqnarray}

The elements of $W_p(0, b)$ satisfy an integration by parts formula which will be useful in our analysis.

\begin{prop}\label{prop2}
	If $u, v\in W_p(0, b)$ and $\xi(t)=(u(t), v(t))$ for all $t\in T$, then $\xi(\cdot)$ is absolutely continuous and $\frac{d\xi}{dt}(t) = \langle u'(t), v(t)\rangle + \langle u(t), v'(t)\rangle$ for almost all $t\in T$.
\end{prop}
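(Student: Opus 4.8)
The plan is to prove the integration by parts formula by a density argument, reducing the general case in $W_p(0,b)$ to the case of smooth functions where the formula is elementary. First I would establish the identity for $X$-valued functions of the form $u,v\in C^1(T,X)$ (or more precisely functions that are ``nice'' enough that both $u'$ and $v'$ are $X$-valued and continuous). For such functions $\xi(t)=(u(t),v(t))$ is classically differentiable and, using the compatibility relation \eqref{eq2} between the inner product $(\cdot,\cdot)$ on $H$ and the duality pairing $\langle\cdot,\cdot\rangle$, the product rule gives
\begin{equation*}
\frac{d\xi}{dt}(t)=(u'(t),v(t))+(u(t),v'(t))=\langle u'(t),v(t)\rangle+\langle u(t),v'(t)\rangle.
\end{equation*}
This smooth case is purely formal; the real content lies in the passage to the limit.

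The key step is then a density/approximation argument. I would invoke the fact that smooth $X$-valued functions (for instance, finite sums $\sum x_i\varphi_i(t)$ with $x_i\in X$ and $\varphi_i\in C^\infty(T)$, or more generally $C^1(T,X)$ functions) are dense in $W_p(0,b)$ with respect to the norm $\|\cdot\|_{W_p}$. Given $u,v\in W_p(0,b)$, choose sequences $u_n,v_n$ of such smooth functions with $u_n\to u$ and $v_n\to v$ in $W_p(0,b)$, i.e. $u_n\to u$ in $L^p(T,X)$, $u_n'\to u'$ in $L^{p'}(T,X^*)$, and likewise for $v_n$. For each $n$ the smooth identity holds, and integrating it over $[s,t]$ yields the integrated (``fundamental theorem of calculus'') form
\begin{equation*}
(u_n(t),v_n(t))-(u_n(s),v_n(s))=\int_s^t\left[\langle u_n'(\tau),v_n(\tau)\rangle+\langle u_n(\tau),v_n'(\tau)\rangle\right]d\tau.
\end{equation*}
I would then pass to the limit $n\to\infty$ on both sides. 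On the right, the two pairings converge in $L^1(T)$ because each is a product of a factor converging in $L^p$ (resp.\ $L^{p'}$) against a factor bounded in the dual exponent, by the continuity of the duality pairing and H\"older's inequality. On the left, one uses the embedding \eqref{eq3}, namely $W_p(0,b)\hookrightarrow C(T,H)$, so that $u_n\to u$ and $v_n\to v$ uniformly in $H$, whence $(u_n(\cdot),v_n(\cdot))\to(u(\cdot),v(\cdot))$ uniformly; in particular the identity passes to the limit at every $s,t\in T$.

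This gives the integrated identity for arbitrary $u,v\in W_p(0,b)$, which shows that $\xi(t)=(u(t),v(t))$ is equal to an absolute constant plus the integral of an $L^1(T)$ function, hence $\xi$ is absolutely continuous on $T$; differentiating the integral representation and applying the Lebesgue differentiation theorem then yields $\frac{d\xi}{dt}(t)=\langle u'(t),v(t)\rangle+\langle u(t),v'(t)\rangle$ for almost all $t\in T$, as claimed. The main obstacle I anticipate is justifying the density of smooth functions in $W_p(0,b)$ and the uniform-in-$H$ convergence on the left-hand side; the latter is precisely where the continuous embedding \eqref{eq3} is indispensable, since $L^p$-type convergence of $u_n,v_n$ alone would not control the pointwise values $(u_n(t),v_n(t))$. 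The crucial algebraic ingredient throughout is the compatibility \eqref{eq2}, which lets us interpret the $H$-inner product of an element of $H$ with an element of $X$ as the $(X^*,X)$ duality pairing, making the mixed terms $\langle u',v\rangle$ and $\langle u,v'\rangle$ meaningful even though $u',v'$ live only in $X^*$.
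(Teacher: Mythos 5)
Your proof is correct: the paper states Proposition \ref{prop2} without proof, as a standard fact drawn from its background references (Gasinski--Papageorgiou, Zeidler, Roubi\v{c}ek), and your density-plus-limit-passage argument is precisely the canonical textbook proof of this formula. The two ingredients you lean on --- density of smooth $X$-valued functions in $W_p(0,b)$ (obtainable by extension and mollification in $t$) and the embedding \eqref{eq3}, which the paper quotes independently of this proposition so no circularity arises --- are both standard and correctly deployed, including the splitting $\langle u_n'-u',v_n\rangle+\langle u',v_n-v\rangle$ implicit in your H\"older estimate on the right-hand side.
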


Now suppose that $(\Omega, \Sigma, \mu)$ is a finite measure space, $\Sigma$ is $\mu-complete$ and $Y$ is a separable Banach space. A multifunction (set-valued function) $F:\Omega\rightarrow 2^Y\backslash\{\emptyset\}$ is said to be ``graph measurable", if
$${\rm Gr}\, F=\{(\omega, y)\in\Omega\times Y:y\in F(\omega)\}\in\Sigma\times B(Y),$$
with $B(Y)$ being the Borel $\sigma$-field of $Y$.

If $F(\cdot)$ has closed values, then graph measurability is equivalent to saying that for every $y\in Y$ the $\RR_+$-valued function $$\omega\mapsto d(y, F(\omega))=\inf\{||y-v||_Y:v\in F(\omega)\}$$ is $\Sigma$-measurable.

Given a graph measurable multifunction $F:\Omega\rightarrow2^Y\backslash\{\emptyset\}$, the Yankov-von Neumann-Aumann selection theorem (see Hu and Papageorgiou \cite[p. 158]{8}) implies that $F(\cdot)$ admits a measurable selection, i.e. that there exists $f:\Omega\rightarrow Y$ a $\Sigma$-measurable function such that $f(\omega)\in F(\omega)$ $\mu$-almost everywhere. In fact, we can find an entire sequence $\{f_n\}_{n\geqslant1}$ of measurable selections such that $F(\omega)\subseteq\overline{\{f_n(\omega)\}}_{n\geq1}$ $\mu$-almost everywhere.

For $1\leq p\leq\infty$, we define
$$S^p_F = \{f\in L^p(\Omega, Y): f(\omega)\in F(\omega)\ \mu\mbox{-almost everywhere}\}.$$
It is easy to see that $S^p_F\neq\emptyset$ if and only if $\omega\mapsto\inf\{||v||_Y:v\in F(\omega)\}$ belongs to $L^p(\Omega)$. This set is ``decomposable" in the sense that if $(A, f_1, f_2)\in\Sigma\times S^p_F\times S^p_F$, then
$$\chi_A f_1 + \chi_{A^c} f_2 \in S^p_F.$$
Finally, for a sequence $\{C_n\}_{n\geq1}$ of nonempty subsets of $Y$, we define
$$w-\limsup_{n\rightarrow\infty}C_n = \{y\in Y: y= w-\lim_{k\rightarrow\infty}y_{n_k}, y_{n_k}\in C_{n_k}, n_1<n_2<\cdots <n_k<\cdots\}.$$
For more details on the notions discussed in this section, we refer to Gasinski and Papageorgiou \cite{4}, Roubi\v{c}ek \cite{16}, Zeidler \cite{17} (for evolution triples and related notations) and Hu and Papageorgiou \cite{8} (for measurable multifunctions).

Let $V$ be a reflexive Banach space and $A:V\rightarrow V^*$ a map. We say that $A$ is ``pseudomonotone", if $A$ is continuous from every finite dimensional subspace of $V$ into $V^*_w$ (= the dual $V^*$ equipped with the weak topology) and if
$$v_n\xrightarrow{w}v\ \mbox{in}\ V,\ \limsup_{n\rightarrow\infty}\langle A(v_n), v_n-v\rangle\leq0$$
then
$$\langle A(v), v-y\rangle\leq\liminf_{n\rightarrow\infty}\langle A(v_n), v_n-y\rangle\ \mbox{for all}\ y\in V.$$

An everywhere defined maximal monotone operator is pseudomonotone. If $V$ is finite dimensional, then every continuous map $A:V\rightarrow V^*$ is pseudomonotone.

In what follows, for any Banach space $Z$, we will use the following notations:
\begin{eqnarray*}
	& & P_{f(c)}(Z) = \{C\subseteq Z: C \ \mbox{is nonempty, closed (and convex)}\}, \\
	& & P_{(w)k(c)}(Z) = \{C\subseteq Z: C\ \mbox{is nonempty, (weakly-) compact (and convex)}\}.
\end{eqnarray*}
The hypotheses on the data of problem (\ref{def1}) are the following:

$H(A):$ $A:T\times T\rightarrow X^*$ is a map such that

\smallskip
\begin{itemize}
	\item [(i)] for all $y\in X, t\mapsto A(t, y)$ is measurable;
	\item [(ii)] for almost all $t\in T$, the map $y\mapsto A(t, y)$ is pseudomonotone;
	\item [(iii)] $||A(t, y)||_*\leq a_1(t) + c_1||y||^{p-1}$ for almost all $t\in T$ and all $y\in X$, with $a_1\in L^{p'}(T)$, $c_1>0$, $2\leq p<\infty$;
	\item [(iv)] $\langle A(t, y), y\rangle \geq 0$ for almost all $t\in T$ and all $y\in X$.
\end{itemize}

\smallskip
$H(B):$ $B\in \mathscr{L}(X, X^*)$, $\langle Bx, y\rangle = \langle x, By\rangle$ for all $x, y\in X$ and $\langle Bx, x\rangle\geq c_0||x||^2$ for all $x\in X$ and some $c_0>0$.

\smallskip
$H(F):$ $F:T\times H\times H\rightarrow P_{f_c}(H)$ is a multifunction such that
\begin{itemize}
	\item [(i)] for all $x, y\in H$, $t\mapsto F(t, x, y)$ is graph measurable;
	\item [(ii)] for almost all $t\in T$, the graph ${\rm Gr}\, F(t, \cdot, \cdot)$ is sequentially closed in $H\times H_w\times H_w$ (here $H_w$ denotes the Hilbert space $H$ furnished with the weak topology);
	\item [(iii)] $|F(t, x, y)|=\sup\{|h|:h\in F(t, x, y)\}\leq a_2(t)(1+|x|+|y|)$ for almost all $t\in T$ and all $x, y\in H$ with $a_2\in L^2(T)_+$.
\end{itemize}
\begin{definition}\label{def3}
	We say that $u\in C(T, X)$ is a ``solution" of problem (\ref{eq1}) with $u_0\in X,\ u_1\in H$, if
	\begin{itemize}
		\item $u'\in W_p(0, b)$ and
		\item there exists $f\in S^2_{F(\cdot, u(\cdot), u'(\cdot))}$ such that
		\begin{equation*}
			\left\{
				\begin{array}{ll}
					u''(t) + A(t, u'(t)) + Bu(t) = f(t)\ \mbox{for almost all}\ t\in T,\\
					u(0) = u_0, u'(0)=u_1.
				\end{array}
			\right\}	
		\end{equation*}
	\end{itemize}
\end{definition}

In what follows, we denote by $S(u_0, u_1)$ the set of solutions of problem (1). Recalling that $W_p(0, b)\hookrightarrow C(T, H)$ (see (\ref{eq3})), we have that
$$S(u_0, u_1)\subseteq C^1(T, H).$$
By Troyanski's renorming theorem (see Gasinski and Papageorgiou \cite[p. 911]{4}) we may assume without  loss of generality  that both $X$ and $X^*$ are locally uniformly convex.
Let $\mathcal{F}:X\rightarrow X^*$ be the duality map of $X$ defined by
$$\mathcal{F}(x)=\{x^*\in X^*: \langle x^*,x\rangle=||x||^2=||x^*||^2_*\}.$$
We know that $\mathcal{F}(\cdot)$ is single-valued and a homeomorphism (see Gasinski and Papageorgiou \cite[p. 316]{4} and Zeidler \cite[p. 861]{17}).

For every $r\geq p$, let $K_r:X\rightarrow X^*$ be the map defined by
$$K_r(y)=||y||^{r-2} \mathcal{F}(y)\ \mbox{for all}\ y\in X.$$

\section{Existence Theorem}

Given $\epsilon>0$, we consider the following perturbation (parabolic regularization) of problem (\ref{eq1}):
\begin{equation}\label{eq5}
	\left\{
		\begin{array}{ll}
			u''(t) + A(t, u'(t)) + \epsilon K_r(u'(t)) + Bu(t) \in F(t, u(t), u'(t))\ \mbox{for a.a.}\ t\in T,\\
			u(0) = u_0,\ u'(0)=u_1.
		\end{array}
	\right\}
\end{equation}

Consider the map $A_\epsilon:T\times X\rightarrow X^*$ defined by
$$A_\epsilon(t, y) = A(t, y) + \epsilon K_r(y)\ \mbox{for all}\ t\in T,\ \mbox{and all}\ y\in X.$$
This map has the following properties:
\begin{itemize}
	\item [(i)] for all $y\in X$, the map $t\mapsto A_\epsilon(t, y)$ is measurable;
	\item [(ii)] for almost all $t\in T$, the map $y\mapsto A_\epsilon(t, y)$ is pseudomonotone;
	\item [(iii)] $||A_\epsilon(t, y)||_*\leq\hat{a}_1(t)+\hat{c}_1||y||^{r-1}$ for almost all $t\in T$, all $y\in X$ and with $\hat{a}_1\in L^{p'}(T)$, $\hat{c}_1>0$ (recall that $r\geq p$ and $\frac{1}{r}+\frac{1}{r'}=1$);
	\item [(iv)] $\langle A_\epsilon(t, y), y\rangle\geq\epsilon||y||^r$ for all $t\in T$, all $y\in X$.
\end{itemize}

So, in problem \eqref{eq1} the viscosity term $A_\epsilon(t, \cdot)$ is coercive. Therefore we can apply Theorem 1 of Papageorgiou and Yannakakis \cite{14} and we obtain the following existence result for the approximate (regularized) problem (\ref{eq5}).
\begin{prop}\label{prop4}
	If hypotheses $H(A),\ H(B),\ H(F)$ hold and $u_0\in X, u_1\in H$, then problem (\ref{eq5}) admits a solution $u_\epsilon\in W^{1,r}((0, b), X)\cap C^1(T, H)$ with
	$$u'_\epsilon\in W_r(0, b).$$
\end{prop}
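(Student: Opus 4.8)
The plan is to verify the four listed properties of $A_\epsilon$ and then invoke the quoted existence result of Papageorgiou and Yannakakis for coercive viscosity terms. The regularization $K_r(y)=\|y\|^{r-2}\mathcal F(y)$ is designed precisely so that its duality pairing with $y$ contributes the missing coercivity, while all the other structural hypotheses $H(A)$, $H(B)$, $H(F)$ are inherited from problem \eqref{eq1}.

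First I would check measurability and the growth estimate, which are routine. For (i), since $\mathcal F(\cdot)$ is a homeomorphism, $y\mapsto K_r(y)$ is continuous, so $t\mapsto A_\epsilon(t,y)=A(t,y)+\epsilon K_r(y)$ is measurable by $H(A)(i)$. For (iii), the duality map satisfies $\|\mathcal F(y)\|_*=\|y\|$, hence $\|K_r(y)\|_*=\|y\|^{r-2}\|y\|=\|y\|^{r-1}$; combining with $H(A)(iii)$ gives $\|A_\epsilon(t,y)\|_*\le a_1(t)+c_1\|y\|^{p-1}+\epsilon\|y\|^{r-1}$, and since $r\ge p\ge2$ one absorbs the lower-order term into the $\|y\|^{r-1}$ bound (using that $\|y\|^{p-1}\le 1+\|y\|^{r-1}$), yielding the stated form with $\hat a_1\in L^{p'}(T)$ and $\hat c_1>0$. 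For (iv), the coercivity follows immediately from $\langle A(t,y),y\rangle\ge0$ in $H(A)(iv)$ together with $\langle K_r(y),y\rangle=\|y\|^{r-2}\langle\mathcal F(y),y\rangle=\|y\|^{r-2}\|y\|^2=\|y\|^r$, so $\langle A_\epsilon(t,y),y\rangle\ge\epsilon\|y\|^r$.

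The main obstacle is property (ii), pseudomonotonicity of $y\mapsto A_\epsilon(t,y)$. The point is that pseudomonotonicity is not simply additive; however, $K_r$ is itself maximal monotone (being, up to the scalar weight $\|y\|^{r-2}$, a monotone potential operator, the subdifferential of the convex functional $y\mapsto\frac1r\|y\|^r$), and is everywhere defined, hence pseudomonotone. I would then use the standard fact that the sum of a pseudomonotone operator and a bounded, everywhere-defined monotone (indeed maximal monotone) operator is again pseudomonotone: if $y_n\xrightarrow{w}y$ in $X$ and $\limsup_n\langle A_\epsilon(t,y_n),y_n-y\rangle\le0$, one shows by a separation argument that both $\limsup_n\langle A(t,y_n),y_n-y\rangle\le0$ and $\limsup_n\langle\epsilon K_r(y_n),y_n-y\rangle\le0$ hold separately, after which the pseudomonotonicity of each summand gives the required $\liminf$ inequality for the sum. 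The continuity from finite-dimensional subspaces into $X^*_w$ is clear since both $A(t,\cdot)$ (by $H(A)(ii)$) and $K_r(\cdot)$ (by continuity of $\mathcal F$) have this property.

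Once (i)--(iv) are established, $A_\epsilon(t,\cdot)$ is a Carath\'eodory-type pseudomonotone viscosity term that is now coercive of order $r$, so the hypotheses of Theorem 1 in Papageorgiou and Yannakakis \cite{14} are met with exponent $r$ in place of $p$. Applying that theorem to problem \eqref{eq5} produces a solution $u_\epsilon$ with $u_\epsilon'\in W_r(0,b)$ and $u_\epsilon\in W^{1,r}((0,b),X)\cap C^1(T,H)$, the last regularity coming from the embedding $W_r(0,b)\hookrightarrow C(T,H)$ in \eqref{eq3} applied to $u_\epsilon'$. This completes the proof.
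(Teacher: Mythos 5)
Your proposal is correct and takes essentially the same route as the paper, which likewise defines $A_\epsilon(t,y)=A(t,y)+\epsilon K_r(y)$, records properties (i)--(iv) (measurability, pseudomonotonicity, $(r-1)$-growth, $\epsilon$-coercivity) without detailed verification, and then immediately invokes Theorem 1 of Papageorgiou and Yannakakis \cite{14} for the now-coercive regularized problem. Your extra care on property (ii) --- noting $K_r=\partial\left(\frac{1}{r}\|\cdot\|^r\right)$ is everywhere defined and maximal monotone, hence pseudomonotone, and that a sum of pseudomonotone maps is pseudomonotone via the splitting of the $\limsup$ --- merely fills in details the paper leaves implicit.
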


To produce a solution for the original problem (\ref{eq1}), we have to pass to the limit as $\epsilon\rightarrow0^+$. To do this, we need to have {\it a priori} bounds for the solutions $u_\epsilon(\cdot)$ which are independent of $\epsilon\in(0, 1]$ and $r\geq p$.
\begin{prop}\label{prop5}
	If hypotheses $H(A), H(B), H(F)$ hold, $u_0\in X, u_1\in H$ and $u(\cdot)$ is a solution of (\ref{eq5}), then there exists $M_0>0$ which is independent of $\epsilon\in(0, 1]$ and $r\geq p$ for which we have
	$$||u||_{C(T, X)},\ ||u'||_{C(T, H)},\ \epsilon^{\frac{1}{r}}||u'||_{L^r(T, X)},\ ||u''||_{L^2(T, X^*)}\leq M_0.$$
\end{prop}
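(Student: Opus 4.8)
The plan is to derive the a priori bounds by testing the regularized equation \eqref{eq5} with $u'_\epsilon$ (the natural energy multiplier) and exploiting the structural hypotheses: the nonnegativity/coercivity of $A_\epsilon$ from property (iv), the symmetry and coercivity of $B$ from $H(B)$, and the sublinear growth of $F$ from $H(F)(iii)$. Writing $u=u_\epsilon$ for brevity and pairing the equation with $u'(t)$, I would use the integration-by-parts formula (Proposition \ref{prop2}) together with the pivot relation \eqref{eq2} to recognize the two key exact derivatives: since $u'\in W_r(0,b)\subseteq W_p(0,b)$, the term $\langle u''(t),u'(t)\rangle$ integrates to $\tfrac12\tfrac{d}{dt}|u'(t)|^2$, and because $B$ is self-adjoint, $\langle Bu(t),u'(t)\rangle=\tfrac12\tfrac{d}{dt}\langle Bu(t),u(t)\rangle$. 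This yields, after integrating over $[0,t]$, an energy identity of the shape

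\begin{equation*}
\tfrac12|u'(t)|^2+\tfrac12\langle Bu(t),u(t)\rangle+\int_0^t\langle A_\epsilon(s,u'(s)),u'(s)\rangle\,ds=\tfrac12|u_1|^2+\tfrac12\langle Bu_0,u_0\rangle+\int_0^t(f(s),u'(s))\,ds,
\end{equation*}

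where $f\in S^2_{F(\cdot,u(\cdot),u'(\cdot))}$ is the selection from Definition \ref{def3}.

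Next I would estimate the right-hand side. By property (iv) of $A_\epsilon$ the viscosity integral is bounded below by $\epsilon\int_0^t\|u'(s)\|^r\,ds\ge 0$, so it can be dropped (or retained) as needed, while the coercivity of $B$ gives $\tfrac{c_0}{2}\|u(t)\|^2$ on the left. The forcing term is controlled by the growth bound $H(F)(iii)$: using $|f(s)|\le a_2(s)(1+|u(s)|+|u'(s)|)$ and Cauchy--Schwarz in $H$, then the embedding constants $|\cdot|\le\hat c_1\|\cdot\|$, I would bound $\int_0^t(f(s),u'(s))\,ds$ by a constant (depending on $\|a_2\|_{L^2(T)}$, $|u_1|$, $\|u_0\|$, $b$) plus an expression of the form $C\int_0^t\big(|u'(s)|^2+\|u(s)\|^2\big)\,ds$, where $|u(s)|$ itself is controlled by $\|u_0\|+\int_0^s\|u'(\tau)\|\,d\tau$ since $u(s)=u_0+\int_0^s u'(\tau)\,d\tau$. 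Setting $\phi(t)=|u'(t)|^2+\|u(t)\|^2$ (or the energy $\tfrac12|u'(t)|^2+\tfrac{c_0}{2}\|u(t)\|^2$), the inequality takes Gronwall form $\phi(t)\le C_1+C_2\int_0^t\phi(s)\,ds$, whence Gronwall's lemma delivers a bound on $\phi$ uniform in $t\in T$. Crucially, the constants involve only $\hat c_1,\hat c_2,c_0,c_1,\|a_1\|_{L^{p'}},\|a_2\|_{L^2},b,\|u_0\|,|u_1|$ and \textbf{not} $\epsilon$ or $r$, because property (iv) lets me discard the $\epsilon$-term on the left with the correct sign. This simultaneously yields the bounds on $\|u'\|_{C(T,H)}$ and $\|u\|_{C(T,X)}$, and feeding the now-bounded quantities back into the energy identity controls $\epsilon\int_0^b\|u'\|^r\,ds$, giving the $\epsilon^{1/r}\|u'\|_{L^r(T,X)}$ bound.

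The last estimate, on $\|u''\|_{L^2(T,X^*)}$, I would obtain by reading the equation as $u''(t)=f(t)-A_\epsilon(t,u'(t))-Bu(t)$ and bounding each term in $X^*$. Here the obstacle to a uniform bound is the viscosity term: the natural growth estimate property (iii) of $A_\epsilon$ only gives $\|A_\epsilon(t,u'(t))\|_*\le\hat a_1(t)+\hat c_1\|u'(t)\|^{r-1}$, which a priori blows up with $r$. \textbf{This is the main difficulty}, and I expect the resolution to rest on splitting $A_\epsilon=A+\epsilon K_r$ and treating the two pieces separately: the genuine viscosity $A$ obeys the $r$-\emph{independent} growth bound $H(A)(iii)$ with exponent $p-1$ and $a_1\in L^{p'}$, so $\|A(t,u'(t))\|_*\le a_1(t)+c_1\|u'(t)\|^{p-1}$ is controlled in $L^{p'}(T,X^*)\hookrightarrow L^2(T,X^*)$ (using $p\ge 2$ so that $p'\le 2$ and the finite measure of $T$) by the already-established bound on $\|u'\|_{C(T,H)}$ and the coercivity-derived $L^p$ control, while the regularizing part contributes $\epsilon\|K_r(u'(t))\|_*=\epsilon\|u'(t)\|^{r-1}$, whose $L^{r'}(T)$-norm is governed by $\big(\epsilon^{1/r}\|u'\|_{L^r(T,X)}\big)^{r-1}\epsilon^{1/r}$ and hence stays bounded uniformly by the previous step. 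Combining these with $\|Bu(t)\|_*\le\|B\|\,\hat c\,\|u(t)\|$ and $\|f\|_{L^2(T,H)}$ bounded via $H(F)(iii)$ and the embedding $H^*\hookrightarrow X^*$, I would collect all four bounds into a single constant $M_0$ depending only on the data, completing the proof.
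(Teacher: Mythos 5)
Your energy estimate is exactly the paper's argument: test with $u'(t)$, use Proposition \ref{prop2} and the symmetry of $B$ to recognize the exact derivatives $\tfrac{1}{2}\tfrac{d}{dt}|u'(t)|^2$ and $\tfrac{1}{2}\tfrac{d}{dt}\langle Bu(t),u(t)\rangle$, drop (or retain) the term $\epsilon\|u'(t)\|^r$ by its sign via $H(A)(iv)$ and the duality map, control $|u(s)|$ through $u(s)=u_0+\int_0^s u'(\tau)\,d\tau$ with Jensen's inequality, and close with a Gronwall-type inequality with $L^1$ kernel built from $a_2^2$ (the paper invokes Proposition 1.7.87 of Denkowski--Mig\'orski--Papageorgiou for precisely this step, since its version of the inequality carries the iterated double integral). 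This correctly delivers the bounds on $\|u'\|_{C(T,H)}$, $\|u\|_{C(T,X)}$ and $\epsilon^{1/r}\|u'\|_{L^r(T,X)}$, uniformly in $\epsilon\in(0,1]$ and $r\ge p$.

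The fourth bound is where your proposal has genuine gaps. First, your embedding is reversed: since $p\ge2$ we have $p'\le2$, so on the finite interval it is $L^2(T,X^*)$ that embeds into $L^{p'}(T,X^*)$, not the other way around; an $L^{p'}$ bound on $A(\cdot,u'(\cdot))$ therefore cannot yield the claimed $L^2(T,X^*)$ bound on $u''$ except when $p=2$. Second, and more seriously, to control $\|u'(\cdot)\|^{p-1}$ even in $L^{p'}(T)$ you need a bound on $\|u'\|_{L^p(T,X)}$, and no such uniform bound is available at this stage: the only $X$-norm control Proposition \ref{prop5} provides is $\epsilon^{1/r}\|u'\|_{L^r(T,X)}\le M_0$, i.e.\ $\|u'\|_{L^r(T,X)}\le M_0\,\epsilon^{-1/r}$, which degenerates as $\epsilon\to0^+$ for fixed $r$; hypothesis $H(A)(iv)$ gives only nonnegativity, not coercivity, and the $C(T,H)$ bound controls $|u'(t)|$, not $\|u'(t)\|$. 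The ``coercivity-derived $L^p$ control'' you appeal to is exactly what is missing here --- in the paper it is manufactured only later, in the proof of Theorem \ref{th6}, by coupling $r=r_{m_n}$ to $\epsilon_n$ so that $\epsilon_n^{1/r_{m_n}}\ge\tfrac{1}{2}$, whence $\|u'_n\|_{L^p(T,X)}\le 2M_0$; that trick is not part of Proposition \ref{prop5}. Third, your estimate $\epsilon\|K_r(u')\|_{L^{r'}(T,X^*)}\le \epsilon^{1/r}\bigl(\epsilon^{1/r}\|u'\|_{L^r(T,X)}\bigr)^{r-1}\le \epsilon^{1/r}M_0^{r-1}$ is uniform in $\epsilon$ for fixed $r$, but it is \emph{not} uniform in $r\ge p$ once $M_0>1$, so the assertion that it ``stays bounded uniformly'' is unjustified as $r$ varies. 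To be fair, the paper dispatches this entire step with the phrase ``directly from (\ref{eq5})'' and in its proof actually records only $\|u''\|_{L^{r'}(T,X^*)}\le M_3$ (note $L^{r'}$, not the $L^2$ of the statement; since $r'\le 2$ this is weaker), which is also what Theorem \ref{th6} uses, at fixed $r$, through the weak convergence $u''_n\xrightarrow{w}v$ in $L^{r'}(T,X^*)$. So you correctly isolated the main difficulty and the right splitting $A_\epsilon=A+\epsilon K_r$, but your resolution does not close it: as written, the $u''$ bound is established neither in the $L^2(T,X^*)$ form nor uniformly in both parameters.
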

\begin{proof}
	It follows from Proposition \ref{prop4} that $u'\in W_r(0, b)$ and that there exists $f\in S^2_{F(\cdot, u(\cdot), u'(\cdot))}$ such that
	$$u''(t) + A(t, u'(t)) + \epsilon K_r(u'(t)) + Bu(t) = f(t)\ \mbox{for almost all}\ t\in T.$$
	We act with $u'(t)\in X$. Then
	\begin{eqnarray}\label{eq6}
		&&\langle u''(t), u'(t)\rangle + \langle A(t, u'(t)), u'(t)\rangle + \epsilon\langle K_r(u'(t)), u'(t)\rangle = (f(t), u'(t))\\
		&& \mbox{for almost all}\ t\in T\ \mbox{(see (\ref{eq2}))}.\nonumber
	\end{eqnarray}

	We examine separately each summand on the left-hand side of (\ref{eq6}). Recall that $u'_r\in W_r(0, b)$. So from Proposition \ref{prop2} (the integration by parts formula), we have
	\begin{equation}\label{eq7}
		\langle u''(t), u'(t)\rangle = \frac{1}{2}\frac{d}{dt}|u'(t)|^2\ \mbox{for almost all}\ t\in T.
	\end{equation}
	Hypothesis $H(A)(iv)$ and the definition of the duality map, imply that
	\begin{equation}\label{eq8}
		\langle A(t, u'(t)), u'(t)\rangle + \epsilon\langle K_r(u'(t)), u'(t)\rangle \geq \epsilon||u'(t)||^r\ \mbox{for almost all}\ t\in T.
	\end{equation}
	By hypothesis $H(B)$, we have
	\begin{equation}\label{eq9}
		\langle Bu(t), u'(t)\rangle = \frac{1}{2}\frac{d}{dt} \langle Bu(t), u(t)\rangle\
\mbox{for almost all}\ t\in T.
	\end{equation}
	We return to (\ref{eq6}) and use (\ref{eq7}), (\ref{eq8}), (\ref{eq9}). We obtain
	\begin{eqnarray}
		& & \frac{1}{2}\frac{d}{dt}|u'(t)|_2 + \epsilon||u'(t)||^r + \frac{1}{2}\frac{d}{dt}\langle Bu(t), u(t)\rangle \leq(f(t), u'(t))\ \mbox{for a.a.}\ t\in T, \nonumber \\
		& \Rightarrow & \frac{1}{2}|u'(t)|^2 + \epsilon\int_0^t||u'(s)||^r ds + c_0||u(t)||^2 \nonumber \\
		& \leq & \int_0^t(f(s), u'(s))ds + \frac{1}{2}|u_1|^2 + \frac{1}{2}||B||_\mathscr{L}||u_0||^2\ \mbox{(see hypothesis $H(B)$)}. \label{eq10}
	\end{eqnarray}
	Using hypothesis $H(F)(iii)$, we get
	\begin{eqnarray}
		& & \int_0^t(f(s), u'(s))ds \nonumber \\
		& \leq & \int_0^t\left[a_2(s) + a_2(s)\left(|u(s)| + |u'(s)|\right)\right]|u'(s)|ds \nonumber \\
		& \leq & \int_0^t|u'(s)|^2 ds + \int_0^t a_2(s)^2 ds + \int_0^t a_2(s)^2\left[|u(s)|^2 + |u'(s)|^2\right]ds . \label{eq11}
	\end{eqnarray}

	Recall that $u\in W^{1,r}((0, b), X)$ (see Proposition \ref{prop4}). So, $u\in AC^{1, r}(T, H)$ and we can write
	\begin{eqnarray}
		& & u(t) = u_0 + \int_0^t u'(s)ds\ \mbox{for all}\ t\in T \nonumber \\
		& \Rightarrow & |u(t)|^2 \leq 2|u_0|^2 + 2b\int_0^t|u'(s)|^2 ds\ \mbox{for all}\ t\in T\ \mbox{(using Jensen's inequality)}. \label{eq12}
	\end{eqnarray}
	We use (\ref{eq12}) in (\ref{eq11}) and obtain
	\begin{eqnarray}
		& & \int_0^t(f(s), u'(s))ds \nonumber \\
		& \leq & ||a_2||_2^2 + \int_0^t\left[1+a_2(s)^2\right]|u'(s)|^2ds + \int_0^t2a_2(s)^2\left[|u_0|^2 + b\int_0^s|u'(\tau)|^2d\tau\right]ds \nonumber \\
		& \leq &  c_2 + \int_0^t\eta(s)|u'(s)|^2ds + 2b\int_0^ta_2(s)^2\int_0^s|u'(\tau)|^2d\tau ds\label{eq13}\\
		& & \mbox{for some $c_2>0$ and $\eta\in L^{1}(T)$}.  \nonumber
	\end{eqnarray}
	We use (\ref{eq13}) in (\ref{eq10}) and have
	\begin{eqnarray}
		& & \frac{1}{2}|u'(t)|^2 + \epsilon\int_0^t||u'(s)||^p ds + c_0||u(t)||^2 \nonumber \\
		& \leq & c_3 + \int_0^t\eta(s)|u'(s)|^2 ds + 2b\int_0^t a_2(s)^2\int_0^s|u'(\tau)|^2d\tau ds\ \mbox{for some $c_3>0$}. \label{eq14}
	\end{eqnarray}
	Invoking Proposition 1.7.87 of Denkowski, Mig\'orski and Papageorgiou \cite[p. 128]{2} we can find $M>0$ (independent of $\epsilon\in(0, 1]$ and $r\geq p$) such that
	\begin{eqnarray*}
		& & |u'(t)|^2\leq M\ \mbox{for all}\ t\in T, \\
		& \Rightarrow & ||u'||_{C(T, H)} \leq M_1 = M^{\frac{1}{2}}.
	\end{eqnarray*}
	Using this bound in (\ref{eq14}), we can find $M_2>0$ (independent of $\epsilon\in(0, 1]$ and $r\geq p$) such that
	$$||u||_{C(T, X)}\leq M_2\ \mbox{and}\ \epsilon^{\frac{1}{r}}||u'||_{L^r(T, X)} \leq M_2.$$

	Finally, directly from (\ref{eq5}), we see that there exists $M_3>0$ (independent of $\epsilon\in(0, 1]$ and $r\geq p$) such that
	$$||u''||_{L^{r'}}(T, X^*) \leq M_3.$$
	We set $M_0=\max\{M_1, M_2, M_3\}>0$ and get the desired bound.
\end{proof}

The bounds produced in Proposition \ref{prop5} permit passing to the limit as $\epsilon\rightarrow0^+$ to produce a solution for problem (\ref{eq1}).
\begin{theorem}\label{th6}
	If hypotheses $H(A), H(B), H(F)$ hold and $u_0\in X, u_1\in H$, then $S(u_0, u_1)\neq\emptyset$.
\end{theorem}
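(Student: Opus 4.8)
The plan is to pass to the limit as $\epsilon\to0^+$ in the regularized problem \eqref{eq5}, using Proposition \ref{prop4} to produce approximate solutions and Proposition \ref{prop5} for compactness. First I would fix a sequence $\epsilon_n\downarrow0$ together with exponents $r_n\uparrow\infty$ chosen so that $\epsilon_n^{1/r_n}=\delta$ for some fixed $\delta\in(0,1)$; this is exactly the freedom granted by the fact that $M_0$ in Proposition \ref{prop5} is independent of both $\epsilon$ and $r$. For each $n$ let $u_n$ solve \eqref{eq5} with parameters $\epsilon_n,r_n$, and let $f_n\in S^2_{F(\cdot,u_n(\cdot),u_n'(\cdot))}$ be the associated selection. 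The a priori bounds give $\|u_n\|_{C(T,X)},\|u_n'\|_{C(T,H)},\|u_n''\|_{L^2(T,X^*)}\le M_0$, and since $\epsilon_n^{1/r_n}=\delta$ also $\|u_n'\|_{L^{r_n}(T,X)}\le M_0/\delta$; by H\"older on the finite interval $T$ this yields a uniform bound on $\|u_n'\|_{L^\rho(T,X)}$ for every $\rho\in[p,\infty)$, in particular in $L^p(T,X)$. Consequently $A(\cdot,u_n'(\cdot))$ is bounded in $L^{p'}(T,X^*)$ by $H(A)(iii)$ and $f_n$ is bounded in $L^2(T,H)$ by $H(F)(iii)$.

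Passing to a subsequence I would extract: $u_n\to u$ in $C(T,H)$ (Arzel\`a--Ascoli, using that $X\hookrightarrow H$ is compact and that $\|u_n'\|_{C(T,H)}\le M_0$ makes $\{u_n\}$ equicontinuous into $H$); $u_n'\to u'$ in $C(T,X^*)$ (the same argument with the compact embedding $H\hookrightarrow X^*$ and $u_n''$ bounded in $L^2(T,X^*)$); $u_n'\to u'$ strongly in $L^2(T,H)$ (the compact embedding \eqref{eq4}, since $u_n'$ is bounded in $W_p(0,b)$); and the weak limits $u_n'\rightharpoonup u'$ in $L^p(T,X)$, $u_n''\rightharpoonup u''$ in $L^2(T,X^*)$, $A(\cdot,u_n')\rightharpoonup\chi$ in $L^{p'}(T,X^*)$, $\epsilon_n K_{r_n}(u_n')\rightharpoonup g$ in $L^{p'}(T,X^*)$ and $f_n\rightharpoonup f$ in $L^2(T,H)$. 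In the limit the equation reads $u''+\chi+g+Bu=f$, and the initial conditions $u(0)=u_0$, $u'(0)=u_1$ follow from the convergences in $C(T,H)$ and $C(T,X^*)$. To see $f\in S^2_{F(\cdot,u,u')}$ I would pass to a further subsequence with $u_n(t)\to u(t)$, $u_n'(t)\to u'(t)$ in $H$ for a.a.\ $t$, and then combine the convexity and closedness of the values of $F$ with hypothesis $H(F)(ii)$ and Mazur's lemma (the standard weak-closure/convergence theorem) to conclude $f(t)\in F(t,u(t),u'(t))$ a.e.

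The heart of the proof is the identification $\chi=A(\cdot,u'(\cdot))$, which I would obtain by transferring the pseudomonotonicity of $y\mapsto A(t,y)$ (hypothesis $H(A)(ii)$) to the superposition operator $v\mapsto A(\cdot,v(\cdot))$ on $L^p(T,X)$, after establishing the key inequality $\limsup_n\int_0^b\langle A(s,u_n'),u_n'-u'\rangle\,ds\le0$. Testing \eqref{eq5} with $u_n'-u'$ and integrating, this $\limsup$ is controlled by $\int_0^b(f_n,u_n'-u')\,ds\to0$ (strong $L^2(T,H)$-convergence of $u_n'$), by the two second-order terms $-\int_0^b\langle u_n'',u_n'-u'\rangle\,ds$ and $-\int_0^b\langle Bu_n,u_n'-u'\rangle\,ds$, whose $\limsup$'s are $\le0$ by the integration-by-parts formula (Proposition \ref{prop2}) together with the weak lower semicontinuity of $v\mapsto|v|^2$ and $v\mapsto\langle Bv,v\rangle$ evaluated at $t=b$, and by the regularization term. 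For the latter I would use the energy identity to note that $\epsilon_n\int_0^b\|u_n'\|^{r_n}\,ds$ stays bounded, whence $\|u_n'\|_{L^{r_n}(T,X)}\le C^{1/r_n}/\delta\to1/\delta$ and therefore $\|u'\|_{L^\infty(T,X)}\le1/\delta$; then Young's inequality, profiting from the $1/r_n$ factor, gives $\epsilon_n\int_0^b\langle K_{r_n}(u_n'),u_n'-u'\rangle\,ds\ge-\tfrac{1}{r_n}\epsilon_n\int_0^b\|u'\|^{r_n}\,ds\ge-\tfrac{|T|}{r_n}(\delta\|u'\|_{L^\infty(T,X)})^{r_n}\to0$, so the penalization does not destroy the $\limsup$ inequality.

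The main obstacle is precisely the interplay between the noncoercive viscosity and the regularization. Because $A$ only satisfies $\langle A(t,y),y\rangle\ge0$ and provides no $L^p(T,X)$ control of $u_n'$, the integrability of $u_n'$ must come entirely from the $\epsilon K_r$ term, which forces $\epsilon_n^{1/r_n}$ to be kept bounded away from $0$; yet I must simultaneously ensure that $g$ does not survive in the limit equation, i.e.\ that $u''+A(\cdot,u')+Bu=f$. This balancing act -- retaining enough integrability of $u_n'$ to identify $A(\cdot,u')$ while disposing of the penalization $g$ -- is the delicate point, and it is made possible exactly by the independence of $M_0$ from $\epsilon$ and $r$ in Proposition \ref{prop5} together with the bound $\|u'\|_{L^\infty(T,X)}\le1/\delta$ extracted from the energy identity (so that the high powers $(\delta\|u'\|)^{r_n}$ remain controlled). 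Once $\chi=A(\cdot,u')$ is established, the superposition operator also yields $\int_0^b\langle A(\cdot,u_n'),u_n'\rangle\,ds\to\int_0^b\langle A(\cdot,u'),u'\rangle\,ds$, which feeds back into the energy identity to suppress $g$; finally $u'\in L^p(T,X)$ with $u''\in L^2(T,X^*)$ gives $u'\in W_p(0,b)$ and $u\in C(T,X)$, so $u\in S(u_0,u_1)$ and the solution set is nonempty.
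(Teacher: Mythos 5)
Your plan follows the paper's proof skeleton quite closely: regularize via \eqref{eq5}, exploit the independence of $M_0$ in Proposition \ref{prop5} from both $\epsilon$ and $r$, couple $\epsilon_n\downarrow0$ with $r_n\uparrow\infty$ to recover a uniform $L^p(T,X)$ bound on $u_n'$ (the paper does this with exponents $r_{m_n}$ chosen so that $\epsilon_n^{1/r_{m_n}}\to1$; you fix $\epsilon_n^{1/r_n}=\delta$), then run the pseudomonotonicity $\limsup$ argument for the Nemytskii operator $a$ and identify $f\in S^2_{F(\cdot,u,u')}$ by pointwise convergences plus convexity/closedness of the values of $F$. Several of your individual steps are correct and in places more careful than the paper's: the observation that the energy identity gives $\epsilon_n\int_0^b\|u_n'\|^{r_n}dt\le C$ uniformly, the resulting bound $\|u'\|_{L^\infty(T,X)}\le 1/\delta$, and the one-sided Young estimate $\epsilon_n\int_0^b\langle K_{r_n}(u_n'),u_n'-u'\rangle\,dt\ge-\frac{\epsilon_n}{r_n}\int_0^b\|u'\|^{r_n}dt\ge-\frac{b}{r_n}\to0$, which does suffice for $\limsup_n((a(u_n'),u_n'-u'))\le0$.

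The genuine gap is the final disposal of the penalization, i.e.\ the claim that the weak limit $g$ of $\epsilon_n\hat{K}_{r_n}(u_n')$ vanishes. Your rigid coupling $\epsilon_n^{1/r_n}=\delta$ is exactly what destroys this: pointwise $\|\epsilon_n K_{r_n}(u_n'(t))\|_*=\epsilon_n\|u_n'(t)\|^{r_n-1}=\delta\,(\delta\|u_n'(t)\|)^{r_n-1}$, and the only uniform information available, $\int_0^b(\delta\|u_n'\|)^{r_n}dt\le C$, is perfectly compatible with, say, $\delta\|u_n'(t)\|\equiv1$ on a fixed set, where $\|\epsilon_n K_{r_n}(u_n')\|_*\equiv\delta$; the sharpest estimate your bounds yield is $\|g\|=O(\delta)$, not $g=0$. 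Your proposed remedy --- ``feeding back into the energy identity'' --- only produces $((g,u'))=\lim_n\epsilon_n\int_0^b\|u_n'\|^{r_n}dt\ge0$, and the limit equation $u''+a(u')+g+Bu=f$ is consistent with this for nonzero $g$; nothing forces the lost penalization energy to be zero. The paper's mechanism is the opposite regime: in \eqref{eq18} and \eqref{eq29} it kills the penalization by the \emph{strong} convergence $\epsilon_n\|\hat{K}_r(u_n')\|_{L^{r'}(T,X^*)}=\epsilon_n^{1/r}\bigl(\epsilon_n^{1/r}\|u_n'\|_{L^r(T,X)}\bigr)^{r-1}\le\epsilon_n^{1/r}M_0^{r-1}\to0$, which needs $\epsilon_n^{1/r}\to0$, i.e.\ $\epsilon_n^{1/r}$ bounded \emph{away from} $1$ --- incompatible with your $\epsilon_n^{1/r_n}=\delta$; the growing exponents $r_{m_n}$ with $\epsilon_n^{1/r_{m_n}}\to1$ enter the paper only through the a priori bound to obtain \eqref{eq27}, not inside the equation. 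As written, your balancing act trades away the very estimate that eliminates $g$; to salvage your route you would need an additional outer limit $\delta\downarrow0$ applied to a family of solutions of the $\delta$-perturbed limit equations, with its own compactness argument, which the proposal does not supply.
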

\begin{proof}
	Let $\epsilon_n\rightarrow0^+$ and let $u_n=u_{\epsilon_n}$ be solutions of the ``regularized" problem (\ref{eq5}) (see Proposition \ref{prop4}). Because of the bounds established in Proposition \ref{prop5} and by passing to a suitable subsequence if necessary, we can say that
	\begin{equation}\label{eq15}
		\left\{
			\begin{array}{ll}
				u_n\xrightarrow{w^*}u\ \mbox{in}\ L^\infty(T, X),\ u_n\xrightarrow{w}u\ \mbox{in}\ C(T, H),\ u_n\rightarrow u\ \mbox{in}\ L^r(T, H) \\
				u'_n\xrightarrow{w^*}y\ \mbox{in}\ L^\infty(T, H),\ u''_n\xrightarrow{w}v\ \mbox{in}\ L^{r'}(T, X^*)\ \mbox{(see (\ref{eq3}) and (\ref{eq4}))}.
			\end{array}
		\right\}
	\end{equation}
	Recall that $u_n\in AC^{1,r}(T, H)$ for all $n\in \NN$ and so
	\begin{eqnarray*}
		& & u_n(t) = u_0 + \int_0^t u'_n(s)ds\ \mbox{for all}\ t\in T, \\
		& \Rightarrow & u(t) = u_0 + \int_0^t y(s)ds\ \mbox{for all}\ t\in T\ \mbox{(see (\ref{eq15}))}, \\
		& \Rightarrow & u\in AC^{1,r}(T, H)\ \mbox{and}\ u'=y.
	\end{eqnarray*}

	Since $u_n\in W_r(0, b)$ for all $n\in\NN$, we have
	$$v=y'=u''\in L^{r'}(T, X^*)\ \mbox{(see Hu and Papageorgiou \cite[p. 6]{9})}.$$
	
Let $a:L^r(T, X)\rightarrow L^{r'}(T, X^*)$ be the nonlinear map defined by
	$$a(u)(\cdot) = A(\cdot, u(\cdot))\ \mbox{for all}\ u\in L^r(T, X).$$
	Also, let $\hat{K}_r:L^r(T, X)\rightarrow L^{r'}(T, X^*)$ be defined by
	$$\hat{K}_r(u)(\cdot) = ||u(\cdot)||^{r-2} \mathscr{F}(u(\cdot))\ \mbox{for all}\ u\in L^r(T, X).$$
	Both maps are continuous and monotone, hence maximal monotone (see Gasinski and Papageorgiou \cite[Corollary 3.2.32, p. 320]{4}).
	
	Finally, let $\hat{B}\in\mathscr{L}(L^r(T, X), L^{r'}(T, X^*))$ be defined by
	$$\hat{B}(u)(\cdot) = B(u(\cdot))\ \mbox{for all}\ u\in L^r(T, X).$$
	We have
	\begin{eqnarray}
		u''_n + a(u'_n) + \epsilon_n\hat{K}_r(u'_n) + \hat{B}u_n = f_n\ \mbox{in}\ L^r(T,X^*) \label{eq16} \\
		\mbox{with}\ f_n\in S^2_{F(\cdot, u_n(\cdot), u'_n(\cdot))}\ \mbox{for all}\ n\in\NN. \nonumber
	\end{eqnarray}
	From (\ref{eq15}) we have
	\begin{eqnarray}
		& & u_n\xrightarrow{w}u\ \mbox{in}\ L^r(T, X), \nonumber \\
		& \Rightarrow & \hat{B} u_n\xrightarrow{w}\hat{B}u\ \mbox{in}\ L^{r'}(T, X^*)\ \mbox{as}\ n\rightarrow\infty . \label{eq17}
	\end{eqnarray}
	Also, we have
	\begin{eqnarray}\label{eq18}
		& & ||\hat{K}_r(u'_n)||_{L^{r'}(T, X^*)} = ||u'_n||^{r-1}_{L^r(T,X)}, \nonumber \\
		& \Rightarrow & \epsilon_n||\hat{K}_r(u'_n)||_{L^{r'}(T, X^*)} = \epsilon_n^{\frac{1}{r}}\left(\epsilon_n^{\frac{1}{r}}||u'_n||_{L^r(T, X)}\right)^{r-1}\ \mbox{(recall that $\frac{1}{r} + \frac{1}{r'} = 1$)} \nonumber \\
		& & \leq \epsilon_n^{\frac{1}{r}}M_0^{r-1}\ \mbox{for all}\ n\in\NN\ \mbox{(see Proposition \ref{prop5})} \nonumber \\
		& \Rightarrow & \epsilon_n||\hat{K}_r(u'_r)||_{L^{r'}(T, X^*)}\rightarrow0\ \mbox{as}\ n\rightarrow\infty
	\end{eqnarray}
	From (\ref{eq15}) and since $v=u''$, we have
	\begin{equation}\label{eq19}
		u''_n\xrightarrow{w}u''\ \mbox{in}\ L^{r'}(T, X^*).
	\end{equation}
	Finally, hypothesis $H(F)(iii)$ and Proposition \ref{prop5} imply that
	$$\{f_n\}_{n\geq1}\subseteq L^{2}(T, H)\ \mbox{is bounded}.$$
	By passing to a subsequence if necessary, we may assume that
	$$f_n\xrightarrow{w}f\ \mbox{in}\ L^{2}(T, H).$$
	Invoking Proposition 3.9 of Hu and Papageorgiou \cite[p. 694]{8}, we have
	\begin{eqnarray}
		& f(t) & \in \overline{\rm conv}\, w-\limsup_{n\rightarrow\infty}\{f_n(t)\} \nonumber \\
		& & \leq \overline{\rm conv}\, w-\limsup_{n\rightarrow\infty} F(t, u_n(t), u'_n(t))\ \mbox{for almost all}\ t\in T\ \mbox{(see (\ref{eq16}))}. \label{eq20}
	\end{eqnarray}
	From (\ref{eq15}) we see that
	$$u'_n\xrightarrow{w}u'\ \mbox{in}\ W^{1, r'}((0, b), X^*).$$
	Recall that $W^{1, r'}((0, b), X^*)\hookrightarrow C(T, X^*)$. So, it follows that
	\begin{eqnarray}
		& & u_n'\xrightarrow{w}u'\ \mbox{in}\ C(T, X^*) \nonumber \\
		& \Rightarrow & u_n'(t)\xrightarrow{w}u'(t)\ \mbox{in}\ X^*\ \mbox{for all}\ t\in T .\label{eq21}
	\end{eqnarray}
	On the other hand, by Proposition \ref{prop5} we have
	$$|u'_n(t)|\leq M_0\ \mbox{for all}\ t\in T,\ \mbox{all}\ n\in\NN.$$
	So, by passing to a subsequence ({\it a priori} the subsequence depends on $t\in T$), we have
	\begin{eqnarray*}
		& & u_n'(t)\xrightarrow{w}\hat{y}(t)\ \mbox{in}\ H \\
		& \Rightarrow & \hat{y}(t) = u'(t)\ \mbox{for all}\ t\in T\ \mbox{(see (\ref{eq21}))}.
	\end{eqnarray*}
	Hence for the original sequence we have
	\begin{equation}\label{eq22}
		u'_n(t)\xrightarrow{w}u'(t)\ \mbox{in}\ H\ \mbox{for all}\ t\in T.
	\end{equation}

	We know that $\{u_n\}_{n\geq1}\subseteq W_{r}(0, b)$ is bounded (see Proposition \ref{prop5}) and recall that $W_r(0, b)\hookrightarrow L^{r}(T, H)$ compactly (see (\ref{eq4})). From this compact embedding and from (\ref{eq22}), we obtain
	\begin{equation}\label{eq23}
		u_n(t)\rightarrow u(t)\ \mbox{in}\ H\ \mbox{for all}\ t\in T\ \mbox{as}\ n\rightarrow\infty.
	\end{equation}
	From (\ref{eq20}), (\ref{eq22}), (\ref{eq23}) and hypothesis $H(F)(iii)$ we infer that
	\begin{eqnarray*}
		& & f(t)\in F(t, u(t), u'(t))\ \mbox{for almost all}\ t\in T, \\
		& \Rightarrow & f\in S^2_{F(\cdot, u(\cdot), u'(\cdot))}.
	\end{eqnarray*}

	In what follows, we denote by $((\cdot, \cdot))$ the duality brackets for the pair $$(L^r(T, X^*), L^r(T, X)).$$ Acting with $u'_n-u'\in L^r(T, X)$ on (\ref{eq16}), we have
	\begin{eqnarray}
		& & ((u''_n, u'_n-u{'})) + ((a(u'_n), u'_n-u')) + ((\epsilon_n\hat{K}_r(u'_n), u'_r-u')) + ((\hat{B}u_n, u'_n-u')) \nonumber \\
		& & = \int_0^b(f_n, u'_n-u')dt\ \mbox{for all}\ n\in\NN. \label{eq24}
	\end{eqnarray}
	Note that
	\begin{eqnarray}
		((u''_n, u'_n-u')) & = & \int_0^b\langle u''_n, u'_n-u'\rangle dt \nonumber \\
		& = & \int_0^b\langle u''_n - u'', u'_n - u'\rangle dt + ((u'', u'_n-u')) \nonumber \\
		& = & \int_0^b\frac{1}{2}\frac{d}{dt}|u'_n - u'|^2 dt + ((u'', u'_n-u'))\ \mbox{(see Proposition \ref{prop2})} \nonumber \\
		& = & \frac{1}{2}|u'_n(b) - u'(b)|^2 + ((u'', u'_n-u')) \nonumber \\
		& & \mbox{(since\ $u'_n(0) = u'(0) = u_{1}$\ for all\ $n\in\NN$,\ see \eqref{eq22})} \nonumber \\
		& \Rightarrow & \liminf_{n\rightarrow\infty}((u''_n, u'_n-u')) = \frac{1}{2}\liminf_{n\rightarrow\infty} |u'_n(b) - u'(b)|^2 \geq 0. \label{eq25}
	\end{eqnarray}
	Also we have
	\begin{eqnarray}
		& & ((\hat{B}(u_n-u), u'_n-u')) = \int_0^b\frac{1}{2}\frac{d}{dt}\langle B(u_n -u), u_n-u\rangle dt \nonumber \\
		& & \frac{1}{2}\langle B(u_n-u)(b), (u_n-u)(b)\rangle \geq 0\ \mbox{(see hypothesis $H(B)$)} \nonumber \\
		& \Rightarrow & ((\hat{B}u, u'_n-u')) \leq ((\hat{B}u_n, u'_n-u'))\ \mbox{for all}\ n\in\NN. \label{eq26}
	\end{eqnarray}
	Recall that
	$$\epsilon_n^{\frac{1}{2}}||u_n||_{L^r(T, X)} \leq M_0\ \mbox{for all}\ n\in\NN\ \mbox{all}\ r\geq p\ \mbox{(see Proposition \ref{prop5})}.$$
	Suppose that $r_m\rightarrow+\infty$, $r_m\geq p$ for all $m\in\NN$. Then for every $n\in\NN$, $\epsilon_n^{\frac{1}{r_m}}\rightarrow1$ as $m\rightarrow\infty$. Invoking Problem 1.175 of Gasinski and Papageorgiou \cite{5}, we can find $\{m_n\}_{n\geq1}$ with $ m_n\rightarrow+\infty$ such that
	$$\epsilon_n^{\frac{1}{r_{m_n}}}\rightarrow1\ \mbox{as}\ n\rightarrow\infty.$$
	Therefore there exists $n_0\in\NN$ such that
	\begin{eqnarray*}
		& & \frac{1}{2} \leq \epsilon_n^{\frac{1}{r_{m_n}}}\ \mbox{for all}\ n\geq n_0, \\
		&&\frac{1}{2}||u'_n||_{L^{r_{m_n}}(T, X)}\leq M_0\ \mbox{for all}\ n\geq n_0,\\
		& \Rightarrow & ||u'_n||_{L^p(T, X)}\leq 2M_0\ \mbox{for all}\ n\geq n_0\ \mbox{(recall that $r_{m_n}\geq p$)}.
	\end{eqnarray*}
	On account of (\ref{eq15}) and since $y=u'$, we have
	\begin{equation}\label{eq27}
		u'_n\xrightarrow{w}u'\ \mbox{in}\ L^{p}(T, X).
	\end{equation}
	Then from (\ref{eq26}) and (\ref{eq27}) it follows that
	\begin{equation}\label{eq28}
		0\leq \liminf_{n\rightarrow\infty}((\hat{B}u_n, u'_n-u')).
	\end{equation}
	In addition, we have
	\begin{equation}\label{eq29}
		\epsilon_n\hat{K}_p(u'_n)\rightarrow0\ \mbox{in}\ L^{p'}(T, X^*)\ \mbox{as}\ n\rightarrow\infty\ \mbox{(see (\ref{eq18}))}.
	\end{equation}
	By Proposition \ref{prop5} and (\ref{eq27}) it follows that
	\begin{eqnarray*}
		& & \{u'_n\}_{n\geq1} \subseteq W_p(0, b)\ \mbox{is bounded}, \\
		& \Rightarrow & \{u'_n\}_{n\geq1} \subseteq L^p(T, H)\ \mbox{is relatively compact (see (\ref{eq4}))}.
	\end{eqnarray*}
	Therefore we have
	\begin{eqnarray}
		& & u'_n\rightarrow u'\ \mbox{in}\ L^{p}(T, H)\ \mbox{(see (\ref{eq27}))}, \nonumber \\
		& \Rightarrow & \int_0^b(f_n, u'_n-u')dt\rightarrow 0\ \mbox{as}\ n\rightarrow\infty\ \mbox{(recall that $p\geq2$)}. \label{eq30}
	\end{eqnarray}

	If in (\ref{eq24}) we pass to the limit as $n\rightarrow\infty$ and use (\ref{eq25}), (\ref{eq28}), (\ref{eq29}), (\ref{eq30}), then
	$$\limsup_{n\rightarrow\infty}((a(u'_n),u'_n-u'))\leq0.$$
	Invoking Theorem 2.35 of Hu and Papageorgiou \cite[p. 41]{9}, we have
	\begin{equation}\label{eq31}
		a(u_n)\xrightarrow{w}a(u')\ \mbox{in}\ L^{p'}(T, X^*)\ \mbox{as}\ n\rightarrow\infty.
	\end{equation}
	In (\ref{eq24}) we pass to the limit as $n\rightarrow\infty$ and use (\ref{eq15}) (with $v=u''$) (\ref{eq27}), (\ref{eq29}), (\ref{eq31}). We obtain
	\begin{eqnarray*}
		& & u'' + a(u') + \hat{B}u=f,\ u(0)=u_0, u'(0)=u_1, f\in S^2_{F(\cdot, u(\cdot), u'(\cdot))}, \nonumber \\
		& \Rightarrow & u\in S(u_0, u_1)\neq\emptyset.
	\end{eqnarray*}
The proof is now complete.
\end{proof}

\subsection{An example}
We illustrate the main abstract result of this paper with a hyperbolic boundary value problem. Let $\Omega\subseteq\RR^N$ be a bounded domain. We consider the following boundary value problem
\begin{equation}\label{eq32}
	\left\{
		\begin{array}{ll}
			\frac{\partial^2u}{\partial t^2} - {\rm div}\,(a(t, z)|Du_t|^{p-2}Du_t) + \beta(z)u_t - \Delta u = f(t, z, u) + \gamma u_t\ \mbox{in}\ T\times\Omega, \\
			u|_{T\times\partial\Omega} = 0,\ u(0, z)=u_0(z),\ u_t(0, z)=u_1(z),
		\end{array}
	\right\}
\end{equation}
with $ u_t=\frac{\partial u}{\partial t}$, $2\leq p\leq \infty$, $\gamma>0$.

The forcing term $f(t, z, \cdot)$ need not to be continuous. So, following Chang \cite{1}, to deal with (\ref{eq32}), we replace it by a multivalued problem (partial differential inclusion), by filling in the gaps at the discontinuity points of $f(t, z, \cdot)$. So we define
$$f_l(t, z, x)=\liminf_{x'\rightarrow x} f(t, z, x')\ \mbox{and}\ f_u(t, z, x)=\limsup_{x'\rightarrow{x}}f(t, z, x').$$
Then we replace (\ref{eq32}) by the following partial differential inclusion
\begin{equation}\label{eq33}
	\left\{
		\begin{array}{ll}
			\frac{\partial^2u}{\partial t^2} -{\rm div}\,(a(t, z)|Du_t|^{p-2}Du_t) + \beta(z)u_t - \Delta u \in \left[f_l(t, z, u), f_u(t, z, u)\right] \mbox{in}\ T\times\Omega, \\
			u|_{T\times\partial\Omega} = 0,\ u(0, z)=u_0(z),\ u_t(0, z)=u_1(z).
		\end{array}
	\right\}
\end{equation}

Our hypotheses on the data of (\ref{eq33}) are the following:

\smallskip
$H(a):$ $a\in L^\infty(T\times\Omega), a(t, z)\geq0$ for almost all $(t, z)\in T\times\Omega$.

\smallskip
$H(\beta):$ $\beta\in L^\infty(\Omega)$, $\beta(z)\geq0$ for almost all $z\in\Omega$.

\smallskip
$H(f):$ $f:T\times \Omega\times\RR\rightarrow\RR$ is a function such that
\begin{itemize}
	\item [(i)] $f_l,\ f_u$ are superpositionally measurable (that is, for all $u:T\times\Omega\rightarrow\RR$ measurable, the functions $(t, z)\mapsto f_l(t, z, u(t, z)),\ f_u(t, z, u(t, z))$ are both measurable);
	\item [(ii)] there exists $a\in L^2(T\times\Omega)$ such that
	$$|f(t, z, x)|\leq a_2(t, z)(1+|x|)\ \mbox{for almost all}\ (t, z)\in T\times\Omega,\ \mbox{all}\ x\in\RR.$$
\end{itemize}

Let $X=W^{1,p}_0(\Omega)$, $H=L^2(\Omega)$ and $X^*=W^{-1,p'}(\Omega)$. Then $(X, H, X^*)$ is an evolution triple with $X\hookrightarrow H$ compactly (by the Sobolev embedding theorem).

Let $A:T\times X\rightarrow X^*$ be defined by
$$\langle A(t, u), h\rangle = \int_\Omega a(t, z) |Du|^{p-2}(Du, Dh)_{\RR^N}dz + \int_\Omega \beta(z)uhdz\ \mbox{for all}\ u, h\in W^{1,p}_0(\Omega).$$
Then $A(t, u)$ is measurable in $t\in T$, continuous and monotone in $u\in W^{1,p}_0(\Omega)$ (hence, maximal monotone) and $\langle A(t, u), u\rangle\geq0$ for almost all $t\in T$, all $u\in W^{1, p}_0(\Omega)$.

Let $B\in\mathscr{L}(X, X^*)$ be defined by
$$\langle Bu, h\rangle = \int_\Omega(Du, Dh)_{\RR^N}dz\ \mbox{for all}\ u, h\in W^{1, p}_0(\Omega).$$
Clearly, $B$ satisfies hypothesis $H(B)$.

Finally, let $G(t, z, x)=\left[f_l(t, z, x), f_u(t, z, x)\right]$ and set
$$F(t, u, v) = S^2_{G(t, \cdot, u(\cdot))} + \gamma v\ \mbox{for all}\ u, v\in L^2(\Omega).$$

Hypothesis $H(f)$ implies that $F$ satisfies $H(F)$.

Using $A(t, u), Bu$ and $F(t, u, v)$ as defined above, we can rewrite problem \eqref{eq33} as the equivalent second order nonlinear evolution inclusion (1). Assuming that $u_0\in W^{1, p}_0(\Omega)$ and that $u_1\in L^2(\Omega)$, we can use Theorem \ref{th6} and infer that problem (\ref{eq30}) has a solution $u\in C^1(T, L^2(\Omega)) \cap C(T, W^{1,p}(\Omega))$ with $\frac{\partial u}{\partial t}\in L^p(\Omega, W^{1,p}_0(\Omega))$ and $\frac{\partial^2 u}{\partial t}\in L^{p'}(\Omega, W^{-1,p'}(\Omega))$.

Note that if $a=0, f(t, z, x)=x$ and $\gamma=0$, then we have the Klein-Gordon equation. If $f(t, z, x)=f(x)=\eta \sin x$ with $\eta>0$, then we have the sine Gordon equation.

\medskip
{\bf Acknowledgments}. This research  was supported in part by  the  Slovenian  Research  Agency
grants P1-0292, J1-7025, J1-8131, and N1-0064. V.D.~R\u adulescu acknowledges the support through a grant  of the Romanian Ministry
of Research and Innovation, CNCS--UEFISCDI, project number PN-III-P4-ID-PCE-2016-0130,
within PNCDI III.

\end{document}